\newtheorem{theo}{Theorem}
\newtheorem{rema}{Remark}
\newtheorem{prop}{Proposition}
\def \d {\displaystyle}
\def \barr {\begin{array}{l}}
\def \ear {\end{array}}
\def \beq {\begin{equation}}
\def \eeq {\end{equation}}
\def \beqn {\begin{eqnarray}}
\def \eeqn {\end{eqnarray}}
\def \f {\end{document}}
\def\dfrac{\displaystyle\frac}
\newcommand{\field}[1]{\mathbb{#1}}
\newcommand{\R}{\field{R}}
\newcommand{\N}{\field{N}}
\begin{document}
\title[Delayed wave equation without displacement term]{\bf Note on exponential and polynomial convergence for a delayed wave equation without displacement}
\author{Ka\"{\i}s Ammari}
\address{UR Analysis and Control of PDEs, UR13ES64, Department of Mathematics, Faculty of Sciences of Monastir, University of Monastir, 5019 Monastir, Tunisia}
\email{kais.ammari@fsm.rnu.tn}

\author{Boumedi\`ene Chentouf}
\address{Kuwait University, Faculty of Science, Department of Mathematics, Safat 13060, Kuwait}
\email{chenboum@hotmail.com, boumediene.chentouf@ku.edu.kw}

\begin{abstract}

This note places primary emphasis on improving the asymptotic behavior of a multi-dimensional delayed wave equation in the absence of any displacement  term. In the first instance, the delay is assumed to occur in the boundary. Then, invoking  a geometric condition \cite{ba,LR} on the domain, the exponential convergence of solutions to their equilibrium state is proved. The strategy adopted of the proof is based on an interpolation inequality combined with a resolvent method. In turn, an internal delayed wave equation is considered in the second case, where the domain possesses trapped ray and hence (BLR) geometric condition does not hold. In such a situation  polynomial convergence results are established. These finding improve earlier results of \cite{amchen,phung}.
\end{abstract}

\subjclass[2010]{34B05, 34D05, 70J25, 93D15}
\keywords{wave equation, time-delay, asymptotic behavior, exponential  convergence, polynomial convergence}

\maketitle
\tableofcontents

\thispagestyle{empty}

\section{Introduction}
Given a natural number $n \geq 2$, consider an open bounded connected set of  $\Omega$ in $\mathbb{R} ^n$, with a sufficiently smooth boundary $\Gamma=\partial \Omega$. We assume that $(\Gamma_0, \Gamma_1)$ is a partition  of $\Gamma$. The first system, we treat in the present paper, is the wave equation
\beq
y_{tt} (x,t) - \Delta y(x,t)=0, \; \hspace{3cm} \mbox{in} \;\; \Omega \times (0,\infty),
\label{1}
\eeq
together with a boundary damping
\begin{equation}
\left \lbrace
\begin{array}{ll}
\frac{\textstyle \partial y }{\textstyle \partial \nu } (x,t) = 0, & \mbox{on} \, \Gamma_0 \times (0,\infty),\\[2mm]
\frac{\textstyle \partial y }{\textstyle \partial \nu } (x,t) = -\alpha y_t (x,t)-\beta y_t (x,t-\tau), & \mbox{on} \, \Gamma_1 \times (0,\infty),
\label{n2}
\end{array}
\right.
\end{equation}
and the initial conditions
\begin{equation}
\left \lbrace
\begin{array}{ll}
y(x,0)=y_0(x), \; y_t(x,0)=z_0(x), & x \in \Omega,\\
y_t(x,t)=f(x,t),  & (x,t) \in \Gamma_1 \times (-\tau,0),
\label{2}
\end{array}
\right.
\end{equation}
where $\nu$ is the unit normal of $\Gamma$ pointing towards the exterior of $\Omega$, $\alpha >0$, and for sake for simplicity $ \beta >0$. Assuming in the first lieu that $\Gamma_1$ is nonempty while $\Gamma_0$ may be empty, we shall invoke the geometric condition \cite{ba,LR} in the ``control zone`` $\Gamma_1$ in order to show that the solutions to the above system exponentially converge to an equilibrium state. The strategy adopted is to use an interpolation inequality combined with a resolvent method.

In the second lieu, we study the asymptotic behavior of a wave equation in a three-dimensional domain with a trapped ray and hence the geometric control condition is violated. Specifically, consider the following delayed wave equation with an internal (distributed) localized damping:
\beq
y_{tt} (x,t) - \Delta y(x,t) + a(x) y_t(x,t) + b(x) y_t(x,t-\tau) =0, \; \hspace{3cm} \mbox{in} \;\; \Omega \times (0,\infty),
\label{1int}
\eeq
as well as the following boundary and initial conditions
\begin{equation}
\left \lbrace
\begin{array}{ll}
\frac{\textstyle \partial y }{\textstyle \partial \nu } (x,t) = 0, & \mbox{on} \, \Gamma \times (0,\infty),\\[1mm]
y(x,0)=y_0(x), \; y_t(x,0)=z_0(x), & x \in \Omega,\\
y_t(x,t)=g(x,t),  & (x,t) \in \Omega \times (-\tau,0),
\label{2int}
\end{array}
\right.
\end{equation}
in which $a$ is a non-negative function in $L^\infty(\Omega)$ and depends on a non-empty proper subset $\omega$ of $\Omega$ on which $1/a \in L^\infty(\omega)$ and in particular, $\left\{x \in \Omega; a(x) > 0\right\}$ is a non-empty open set of $\Omega$, and $b \in L^\infty(\Omega)$ is a non-negative function. Assuming, as in \cite{phung}, the trapping geometry on $\Omega$, we are able to show that the solutions converge to their equilibrium state with a polynomial decay despite the presence of the delay.

\medskip


The outcomes of this work improve those available in literature in a number of directions: firstly, we are able to extend the result \cite{amchen}, where only the logarithmic convergence has been established. Secondly, the polynomial decay rate of \cite{phung} is extended to the case where an internal delay occurs in the wave equation. Lastly, it is show that the optimal result obtained in \cite{stahn} remains valid despite the presence a delay term.

Now, let us briefly outline the content of this article. In Section \ref{sect2}, preliminaries and problem statement are presented. Section \ref{sect3} deals with the exponential convergence of solutions to an equilibrium state under the geometric control condition (BLR) on the stabilization zone $\Gamma_1$. In section \ref{sect4}, we prove a polynomial convergence result for (\ref{1int})-(\ref{2int}) with a trapping geometric condition. Finally, this note ends with a conclusion.

\section{Preliminaries} \label{sect2}
\setcounter{equation}{0}

\subsection{Delayed boundary damping}
Consider the state space
$$ {\mathcal H}=H^1(\Omega) \times L^2(\Omega) \times L^2(\Gamma_1 \times (0,1)),$$
equipped with the inner product
\beq
\barr
\langle (y,z,u), (\tilde y,\tilde z, \tilde u) \rangle_{\scriptscriptstyle {\mathcal H}} = \d \int_{\Omega} \left( \nabla y \nabla \tilde y+z \tilde z \right)  dx + \xi \int_0^1 \int_{\Gamma_1} u \tilde u \, d\sigma d\rho  \; + \\
 \d \varpi \left[   \int_{\Omega} z \, dx  + (\alpha + \beta) \int_{\Gamma_1} y \, d\sigma -\beta \tau \int_0^1 \int_{\Gamma_1} u  \, d\sigma d\rho  \right] \left[  \int_{\Omega} \tilde z \, dx  + (\alpha + \beta) \int_{\Gamma_1}  \tilde y \, d\sigma -\beta \tau \int_0^1 \int_{\Gamma_1}  \tilde u \, d\sigma d\rho  \right], \label{5}
\ear
\eeq
where $\varpi >0$ is a positive such that
\beq
\varpi < \min \left \{ \frac{1}{(\alpha + \beta)(\alpha + \beta-\delta)}, \frac{\delta}{2(\alpha + \beta-\delta) 
\left| \Omega \right|} , \frac{\delta \xi}{2(\alpha + \beta-\delta) \left| \Gamma_1 \right| } \right\}.
\label{var}
\eeq

Furthermore, we assume that $\alpha, \, \beta$ and $\xi$ obey the following conditions
\beq
\barr
0 < \beta < \alpha,\\
\tau \beta < \xi < \tau (2\alpha-\beta).
\label{xx}
\ear
\eeq
Thereby, it has been shown in \cite[Proposition 1]{amchen} that the space ${\mathcal H}$ equipped with the inner product (\ref{5}) is a Hilbert space provided that $\varpi$ satisfies (\ref{var}). Moreover, letting $z=y_t$, and
$u(x,\rho,t)=y_t(x,t-\tau \rho ), \, \quad \quad x \in \Gamma_1, \, \rho \in (0 ,1), \,   t>0,$ \cite{da}, the system (\ref{1})-(\ref{2}) can be formulated  in an abstract differential equation in the Hilbert state space $ {\mathcal H}$  as follows
\beq
\left\{
\barr
\Phi_t (t)= {\mathcal A} \Phi (t),\\
\Phi (0)= \Phi_0 =(y_0,z_0,f),
\ear
\right.
\label{s}
\eeq
in which ${\mathcal A}$ is the unbounded linear operator defined by
\beq
\barr
 {\mathcal D}({\mathcal A}) =\Bigl\{ (y, z,u ) \in H^1(\Omega) \times H^1(\Omega) \times L^2(\Gamma_1; H^1 (0,1)); \Delta y \in L^2(\Omega); \,
\frac{\textstyle \partial y }{\textstyle \partial \nu } = 0 \, \mbox{on} \, \Gamma_0; \\
\hspace{5.2cm} \frac{\textstyle \partial y }{\textstyle \partial \nu }+\alpha z + \beta u(\cdot,1)=0, \, \mbox{and} \, \; z=u(\cdot,0) \, \mbox{on} \, \Gamma_1
\label{10}
\Bigr \},
\ear
\eeq
and
\beq
{\mathcal A} (y, z,u ) = ( z, \Delta y, -\tau^{-1} u_{\rho} ), \quad  \forall (y, z,u ) \in {\mathcal D}({\mathcal A}).
\label{11}
\eeq

We have the following result:
\begin{prop} (\cite[Proposition 2]{amchen})
Under the conditions (\ref{xx}), the linear operator ${\mathcal A}$ generates a $C_0$-semigroup of contractions $S(t)$ on ${\mathcal H}=\overline {{\mathcal D}({\mathcal A})}$. Additionally, for any initial data $\Phi_0 \in {\mathcal D}({\mathcal A})$, the system (\ref{s}) possesses a unique strong solution $\Phi (t) = S(t) \Phi_0 \in {\mathcal D}({\mathcal A})$ for all $ t \geq 0$ such that
$ \Phi (\cdot)  \in C^1 (\R^+;{\mathcal H}) \cap C  ( \R^+;{\mathcal D} ({\mathcal A}))$. In turn, if $\Phi_0 \in {\mathcal H}$, then the system (\ref{s}) has a unique weak solution $\Phi (t)= S(t) \Phi_0 \in {\mathcal H}$ such that $ \Phi (\cdot)  \in C^0 (\R^+;{\mathcal H} )$.
\label{l1}
\end{prop}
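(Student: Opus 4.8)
The plan is to apply the Lumer--Phillips theorem: I will show that $\mathcal A$ is dissipative with respect to the inner product (\ref{5}) and that $I-\mathcal A$ maps $\mathcal D(\mathcal A)$ onto $\mathcal H$; since $\mathcal D(\mathcal A)$ is dense in $\mathcal H$, this yields the generation of a $C_0$-semigroup of contractions $S(t)$, after which the stated regularity is the standard consequence of semigroup theory (orbits of data in $\mathcal D(\mathcal A)$ are classical solutions in $C^1(\R^+;\mathcal H)\cap C(\R^+;\mathcal D(\mathcal A))$, while orbits of data in $\mathcal H$ are mild, i.e. weak, solutions in $C^0(\R^+;\mathcal H)$).

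\textbf{Dissipativity.} For $\Phi=(y,z,u)\in\mathcal D(\mathcal A)$ I would compute $\mathrm{Re}\,\langle\mathcal A\Phi,\Phi\rangle_{\mathcal H}$ directly from (\ref{5}) and (\ref{11}). Green's formula turns the first block $\int_\Omega(\nabla z\cdot\nabla y+z\,\Delta y)\,dx$ into the boundary integral $\int_{\Gamma_1}\frac{\partial y}{\partial\nu}\,z\,d\sigma$ (the $\Gamma_0$ contribution vanishing by (\ref{10})), which equals $-\int_{\Gamma_1}(\alpha z+\beta u(\cdot,1))\,z\,d\sigma$; the transport block produces $-\frac{\xi}{2\tau}\int_{\Gamma_1}\big(u(\cdot,1)^2-z^2\big)\,d\sigma$ after integrating by parts in $\rho$ and using $u(\cdot,0)=z$. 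A short computation shows that the rank-one $\varpi$-term in (\ref{5}) contributes exactly $0$, because the linear functional $\Phi\mapsto\int_\Omega z\,dx+(\alpha+\beta)\int_{\Gamma_1}y\,d\sigma-\beta\tau\int_0^1\int_{\Gamma_1}u\,d\sigma d\rho$ vanishes when evaluated at $\mathcal A\Phi$, i.e. it is a conserved quantity of the flow (the hidden conservation law forced by the absence of a displacement term). Collecting terms, $\mathrm{Re}\,\langle\mathcal A\Phi,\Phi\rangle_{\mathcal H}$ is a quadratic form in $z|_{\Gamma_1}$ and $u(\cdot,1)$, and the inequalities $0<\beta<\alpha$ and $\tau\beta<\xi<\tau(2\alpha-\beta)$ in (\ref{xx}) are exactly what makes its symmetric matrix positive semidefinite, hence $\mathrm{Re}\,\langle\mathcal A\Phi,\Phi\rangle_{\mathcal H}\le 0$.

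\textbf{Maximality.} Given $F=(f_1,f_2,f_3)\in\mathcal H$ I would solve $(I-\mathcal A)\Phi=F$. Written componentwise this gives $z=y-f_1$, the elliptic equation $y-\Delta y=f_1+f_2$ in $\Omega$, and the linear first-order ODE $u+\tau^{-1}u_\rho=f_3$ in $\rho$ with $u(\cdot,0)=z$, whose solution expresses $u$ — in particular $u(\cdot,1)$ — explicitly in terms of $z$ and $f_3$. Substituting back, $y$ is characterized as the solution of the boundary value problem $y-\Delta y=f_1+f_2$ in $\Omega$, $\frac{\partial y}{\partial\nu}=0$ on $\Gamma_0$, $\frac{\partial y}{\partial\nu}+\alpha z+\beta u(\cdot,1)=0$ on $\Gamma_1$; its variational formulation on $H^1(\Omega)$ has a continuous bilinear form that is coercive (coercivity being supplied by the zeroth-order term $y$ coming from the $I$-shift, the $\Gamma_1$ boundary terms being of lower order), so Lax--Milgram gives a unique $y\in H^1(\Omega)$. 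Elliptic regularity then yields $\Delta y\in L^2(\Omega)$, one checks $u\in L^2(\Gamma_1;H^1(0,1))$ and that all the constraints in (\ref{10}) hold, hence $\Phi\in\mathcal D(\mathcal A)$ and $I-\mathcal A$ is onto.

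\textbf{Main obstacle.} The delicate step is the dissipativity estimate: the delayed feedback $\beta y_t(\cdot,t-\tau)$ is not dissipative by itself, and one has to verify that it is dominated by the combination of the damping $\alpha y_t$ and the weighted memory integral — which is precisely what the admissible window (\ref{xx}) for $\xi$ guarantees. In addition, the rank-one $\varpi$-correction in (\ref{5}), although invisible in $\mathrm{Re}\,\langle\mathcal A\cdot,\cdot\rangle_{\mathcal H}$, is indispensable for $\|\cdot\|_{\mathcal H}$ to be a norm equivalent to the full norm of $\mathcal H$ (the gradient term alone fails to control $\|y\|_{L^2(\Omega)}$), and it must be tracked consistently in both the dissipativity and the surjectivity arguments. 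All of this is carried out in detail in \cite{amchen}; we merely recall the structure of the proof here.
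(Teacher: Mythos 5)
Your outline is correct and is precisely the standard Lumer--Phillips argument (dissipativity under the window (\ref{xx}) plus surjectivity of $I-\mathcal A$ via Lax--Milgram, with the rank-one $\varpi$-functional annihilated by $\mathcal A$); the present paper gives no proof of this proposition, simply importing it from \cite[Proposition 2]{amchen}, and your sketch matches the argument carried out there. The computations you indicate all check out, including the vanishing of the $\varpi$-term on the range of $\mathcal A$ and the negative semidefiniteness of the boundary quadratic form in $(z|_{\Gamma_1},u(\cdot,1))$ under $\tau\beta<\xi<\tau(2\alpha-\beta)$ and $\beta<\alpha$.
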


\subsection{Delayed internal damping}

Let
$$
L^2_0 (\Omega) = \left\{u \in L^2(\Omega); \, \int_\Omega u(x) \, dx = 0 \right\}, \quad H^{1,0}(\Omega) = H^1(\Omega) \cap L^2_0(\Omega),
$$
and the state space
$$
\mathcal{X} = H^{1,0} (\Omega) \times L^2_0 (\Omega) \times L^2 (\Omega \times (0,1)),
$$
endowed with the inner product
\begin{equation}
\label{5int}
\langle(y_1,z_1,u_1),(y_2,z_2,u_2)\rangle_{\mathcal{X}} = \int_\Omega \left(\nabla y_1 \nabla y_2 + z_1 z_2 \right) \, dx + \xi \, \int_0^1 \int_\Omega u_1 u_2 \, dx \, d\rho.
\end{equation}

\medskip

Additionally, $a,b$ and $\xi$ are supposed to verify the following conditions:
\begin{equation}
\label{xxint}
0 < \left\|b\right\|_{L^\infty(\Omega)}  <  \left\|a\right\|_{L^\infty(\Omega)}, \quad \tau  \left\|b\right\|_{L^\infty(\Omega)} < \xi < \tau \left(2 \left\|a\right\|_{L^\infty(\Omega)} -  \left\|b\right\|_{L^\infty(\Omega)}\right).
\end{equation}

Thanks to  the change of variables $u(x,\rho,t)=y_t(x,t-\tau \rho ), \,\, (x, \rho ,t) \in \Omega \times  (0 ,1) \times (\infty,0)  $ \cite{da}, the system (\ref{1int})-(\ref{2int}) can be written as follows
\beq
\left\{
\barr
\Phi_t (t)= {\mathcal A}_{a,b} \Phi (t),\\
\Phi (0)= \Phi_0 =(y_0,z_0,g),
\ear
\right.
\label{sint}
\eeq
where ${\mathcal A}_{a,b}$ is an unbounded linear operator defined by
\beq
\barr
 {\mathcal D}({\mathcal A}_{a,b}) =\Bigl\{ (y, z,u ) \in H^{1,0}(\Omega) \times H^{1,0}(\Omega) \times L^2(\Omega; H^1 (0,1)); \Delta y \in L^2(\Omega); \,
\frac{\textstyle \partial y }{\textstyle \partial \nu } = 0 \, \mbox{on} \, \Gamma; z=u(\cdot,0) \, \mbox{on} \, \Omega
\label{10int}
\Bigr \},
\ear
\eeq
and
\beq
{\mathcal A}_{a,b} (y, z,u ) = ( z, \Delta y - az - bu, -\tau^{-1} u_{\rho} ), \quad  \forall (y, z,u ) \in {\mathcal D}({\mathcal A}_{a,b}).
\label{11int}
\eeq

Assume that the conditions (\ref{xxint}) hold. Then, according to \cite{valein,ka3}, the linear operator ${\mathcal A}_{a,b}$ generates a $C_0$-semigroup of contractions $e^{t {\mathcal{A}}_{a,b}}$ on ${\mathcal X}=\overline {{\mathcal D}({\mathcal A}_{a,b})}$.

\section{Exponential convergence} \label{sect3}
\setcounter{equation}{0}
According to \cite{amchen}, we  know that we have the following strong asymptotic stability result for the system (\ref{s}).
\begin{theo} (\cite[Theorem 2]{amchen})
Assume that the conditions (\ref{xx}) hold. Given an initial data $\Phi_0=(y_0,z_0,f) \in {\mathcal H}$, we define $$\chi =\displaystyle  \dfrac{1}{ (\alpha+\beta) \left| \Gamma_1 \right| }  \left( \int_{\Omega} z_0 \, dx + (\alpha+\beta) \int_{\Gamma_1}  y_0 \, d\sigma -\beta \tau \int_0^1 \int_{\Gamma_1} f \, d\sigma d\rho  \right).$$
Then,  the unique solution  $\Phi (t)=(y(\cdot,t), y_t (\cdot,t)),y_t (\cdot,t-\tau \rho))$ of (\ref{s}) tends in ${\mathcal {H}}$ to $(\chi ,0,0)$, as $t \longrightarrow +\infty $ with a logarithmic decay $\log{(2 + t)}$.
\label{t1}
\end{theo}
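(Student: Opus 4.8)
Since the statement is quoted from \cite{amchen}, I outline the proof strategy. The plan is to deduce it from a resolvent estimate of exponential type on the imaginary axis, in the spirit of Lebeau--Robbiano and Burq for the wave equation \emph{without} the geometric control condition. First I would isolate the limit. The functional $\Pi(y,z,u)=\int_\Omega z\,dx+(\alpha+\beta)\int_{\Gamma_1}y\,d\sigma-\beta\tau\int_0^1\int_{\Gamma_1}u\,d\sigma d\rho$ is conserved along the flow: differentiating $\Pi(S(t)\Phi_0)$ and using (\ref{11}) together with the relations in (\ref{10}), all boundary contributions cancel algebraically, so $\Pi(S(t)\Phi_0)=\Pi(\Phi_0)$. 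Since $(\chi,0,0)\in\ker{\mathcal A}$ and $\chi$ is chosen exactly so that $\Pi(\chi,0,0)=(\alpha+\beta)|\Gamma_1|\,\chi=\Pi(\Phi_0)$, the shifted state $\Psi(t)=S(t)\Phi_0-(\chi,0,0)$ solves the same abstract equation and remains in the closed, ${\mathcal A}$-invariant hyperplane $\widetilde{\mathcal H}=\ker\Pi$, which, thanks to the shape of the inner product (\ref{5}), coincides with $(\ker{\mathcal A})^{\perp}$. It therefore suffices to show that $S(t)|_{\widetilde{\mathcal H}}\to 0$ with a logarithmic rate.

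Next I would analyse the spectrum on $i\mathbb{R}$. For $\lambda=is\neq 0$ the equation ${\mathcal A}\Phi=\lambda\Phi$ forces $z=\lambda y$ and $u(x,\rho)=z(x)e^{-\tau\lambda\rho}$, so the coupling in (\ref{10}) collapses to the single frequency-dependent impedance condition $\partial_\nu y=-\lambda(\alpha+\beta e^{-\tau\lambda})y$ on $\Gamma_1$, $\partial_\nu y=0$ on $\Gamma_0$, with $-\Delta y=s^2 y$ in $\Omega$. Multiplying by $\bar y$, integrating by parts, and using that the left-hand side is real, the imaginary part gives $s(\alpha+\beta\cos\tau s)\int_{\Gamma_1}|y|^2\,d\sigma=0$; by (\ref{xx}) one has $\alpha+\beta\cos\tau s\geq\alpha-\beta>0$, hence $y=\partial_\nu y=0$ on $\Gamma_1$, and Holmgren's uniqueness theorem on the connected set $\Omega$ forces $y\equiv 0$, so $\Phi=0$. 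The case $s=0$ gives $\ker{\mathcal A}=\mathbb{R}(1,0,0)$, and the obstruction $\int_\Gamma\partial_\nu y=0$ for a harmonic function rules out a Jordan block. Finally, solving the $\rho$-transport equation explicitly and then the elliptic boundary value problem, whose boundary symbol $\alpha+\beta e^{-is\tau}$ is uniformly coercive by (\ref{xx}), shows that $is-{\mathcal A}$ is boundedly invertible on $\widetilde{\mathcal H}$ for every $s\in\mathbb{R}$; since ${\mathcal A}$ has no compact resolvent (because of the transport variable), this must be done directly rather than via the Fredholm alternative.

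The heart of the argument is the quantitative bound $\|(is-{\mathcal A})^{-1}\|_{{\mathcal L}(\widetilde{\mathcal H})}\leq C\,e^{c|s|}$ for $|s|\geq 1$. Reducing the resolvent equation as above to $(\Delta+s^2)y=F$ with the perturbed impedance datum on $\Gamma_1$, I would invoke the Carleman/interpolation inequality of Lebeau--Robbiano type \cite{LR}, which controls $\|y\|_{H^1(\Omega)}$ by $e^{c|s|}$ times the source plus an interior observation term, and then absorb the observation using the dissipation carried by the boundary term, coercive again thanks to (\ref{xx}). The delay enters only through the bounded, uniformly elliptic symbol $\alpha+\beta e^{-is\tau}$, so the estimate of the undelayed problem transfers verbatim; because the (BLR) condition \cite{ba} is not assumed here, only an exponential-in-$|s|$ (not polynomial) bound is available, and this is precisely what fixes the rate at logarithmic. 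The abstract theorem of Burq, equivalently the Batty--Duyckaerts result on non-uniform decay, then converts this resolvent bound into $\|S(t)\Psi_0\|_{\widetilde{\mathcal H}}\leq C\,(\log(2+t))^{-1}\|\Psi_0\|_{{\mathcal D}({\mathcal A})}$ for smooth data, and a density argument yields $S(t)\Psi_0\to 0$ in $\widetilde{\mathcal H}$ for all data; translating back gives $S(t)\Phi_0\to(\chi,0,0)$ in ${\mathcal H}$ with the claimed logarithmic decay $\log(2+t)$.

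The main obstacle I expect is the exponential resolvent estimate: one must run the Carleman machinery for the Helmholtz operator with the \emph{frequency-dependent} impedance coefficient produced by the delay, while keeping the boundary dissipation strong enough, uniformly in $s$, to absorb the interior observation term; condition (\ref{xx}) is exactly what guarantees this uniform coercivity. A secondary technical nuisance is the loss of a compact resolvent, which forces the invertibility along $i\mathbb{R}\setminus\{0\}$ to be established by hand.
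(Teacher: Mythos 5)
This theorem is quoted from \cite[Theorem 2]{amchen} and the present paper gives no proof of it, so there is no internal argument to compare against; I can only assess your outline on its own terms and against the strategy of the cited reference. Your reconstruction is sound and matches the expected route. The steps you actually carry out check: the functional $\Pi$ is indeed conserved (with $z_t=\Delta y$, the divergence theorem and the boundary condition $\partial_\nu y=-\alpha z-\beta u(\cdot,1)$ on $\Gamma_1$ make the three contributions cancel, using $u_t=-\tau^{-1}u_\rho$ and $u(\cdot,0)=z$); the hyperplane $\ker\Pi$ is exactly the space $\dot{\mathcal H}$ the paper introduces in Section 3, and because the inner product \eqref{5} carries the extra rank-one term $\varpi\,\Pi(\cdot)\Pi(\cdot)$, it is indeed the orthogonal complement of $\ker\mathcal A=\mathbb{R}(1,0,0)$; the reduction of ${\mathcal A}\Phi=is\Phi$ to the impedance problem $\partial_\nu y=-is\bigl(\alpha+\beta e^{-is\tau}\bigr)y$ and the observation that $\alpha+\beta\cos(\tau s)\geq\alpha-\beta>0$ under \eqref{xx} correctly eliminate imaginary eigenvalues via unique continuation; and your exclusion of a Jordan block at $0$ via $\int_\Gamma\partial_\nu y=-(\alpha+\beta)|\Gamma_1|\neq 0$ is right (in fact $0\notin\sigma$ on $\ker\Pi$ at all). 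The one substantive gap is the exponential resolvent bound $\|(is-\mathcal A)^{-1}\|\leq Ce^{c|s|}$, which you invoke rather than prove; that Carleman/interpolation estimate with the frequency-dependent boundary symbol is precisely the analytic content of \cite{amchen} (built on \cite{LR,burq}), so flagging it as the main obstacle without executing it is an honest but real omission for a self-contained proof. With that caveat, the architecture (invariant hyperplane, empty imaginary spectrum, exponential resolvent growth, Burq/Batty--Duyckaerts conversion to a $1/\log(2+t)$ rate for data in $\mathcal D(\mathcal A)$, then density) is the correct one.
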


In this section, we will improve the above result. Indeed, invoking the geometric condition (BLR) on the control zone $\Gamma_1$, we shall show that the convergence is actually exponential.

\medskip

To proceed, let $\dot{\mathcal{H}}$ be the closed subspace of of co-dimension $1$ of the state space $\mathcal{H}$ defined as follows:
$$
\dot{\mathcal{H}} = \left\{(y,z,u) \in \mathcal{H}; \,
\int_\Omega z(x) \, dx - \beta \tau \, \int_0^1 \int_{\Gamma_1} u(\sigma,\rho) \, d \sigma \, d \rho + (\alpha + \beta) \, \int_{\Gamma_1} y \, d \sigma = 0  \right\}
$$
and denote by $\dot{\mathcal{A}}$ the following new operator
$$
\dot{\mathcal{A}} : \mathcal{D}(\dot{\mathcal{A}}) := \mathcal{D}(\mathcal{A}) \cap \dot{\mathcal{H}} \subset \dot{\mathcal{H}} \rightarrow \dot{\mathcal{H}},
$$
\begin{equation}
\label{1.62bis}
\dot{\mathcal{A}} (y,z,u) = \mathcal{A} (y,z,u), \, \forall \, (y,z,u) \in \mathcal{D}(\dot{\mathcal{A}}).
\end{equation}

Under the conditions (\ref{xx}), the operator $\dot{\mathcal{A}}$ (see (\ref{11})) generates a $C_0$-semigroup of contractions $e^{t \dot{\mathcal{A}}}$ on $\dot{{\mathcal H}}$. Moreover, the spectrum $\sigma(\dot{\mathcal{A}})$ of $\dot{\mathcal{A}}$ consists of isolated eigenvalues of finite algebraic multiplicity only \cite{amchen}.

Recall the following frequency domain theorem for exponential stability from \cite{pruss,hung} of a $C_0$-semigroup of contractions on a Hilbert space:
\begin{theo}
\label{lemraokv}
Let $A$ be the generator of a $C_0$-semigroup  of contractions $S(t)$  on a Hilbert space $H$. Then, $S(t)$ is exponentially stable, i.e.,  for all $t >0$,
$$||S(t)||_{{\mathcal L}(H)} \leq C \, e^{-\omega t},$$
for some positive constants $C$ and $\omega $  if and only if
\begin{equation}
\rho (A) \supset \bigr\{i \gamma \bigm|\gamma \in \R \bigr\} \equiv i \R, \label{1.8wkv} \end{equation}
and
 \begin{equation}\limsup_{|\gamma| \to + \infty }  \|  (i\gamma I -A^{-1}\|_{{\mathcal L}({\mathcal X})} <\infty,
\label{1.9kv}
\end{equation}
where $\rho({A})$ denotes the resolvent set of the operator ${A}$.
\end{theo}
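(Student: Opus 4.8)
The plan is to establish the two implications separately; the forward one is elementary, and all the weight sits in the converse, where the Hilbert-space hypothesis enters through Plancherel's theorem. For necessity I would use only the Laplace representation of the resolvent: if $\|S(t)\|_{\mathcal L(H)}\le Ce^{-\omega t}$, then for $\mathrm{Re}\,\lambda>-\omega$ the integral $\int_0^\infty e^{-\lambda t}S(t)\,dt$ converges in $\mathcal L(H)$ and equals $(\lambda I-A)^{-1}$, with norm at most $C/(\mathrm{Re}\,\lambda+\omega)$; taking $\lambda=i\gamma$ gives at once $i\R\subset\rho(A)$ and $\|(i\gamma I-A)^{-1}\|\le C/\omega$ uniformly in $\gamma$, which is exactly \eqref{1.8wkv}--\eqref{1.9kv}.

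For sufficiency, assume $i\R\subset\rho(A)$ and set $M:=\sup_{\gamma\in\R}\|(i\gamma I-A)^{-1}\|$, which is finite because the $\limsup$ hypothesis together with $i\R\subset\rho(A)$ and continuity of the resolvent on $\rho(A)$ yields a genuine finite supremum. I would first enlarge the resolvent set to a vertical strip: from the factorisation $\mu I+i\gamma I-A=(i\gamma I-A)\bigl(I+\mu(i\gamma I-A)^{-1}\bigr)$ and a Neumann series, for $|\mu|\le\mu_0:=1/(2M)$ one gets $\mu+i\gamma\in\rho(A)$ with $\|(\mu I+i\gamma I-A)^{-1}\|\le 2M$; combined with the Hille--Yosida bound on $\{\mathrm{Re}\,\lambda>0\}$ (valid since $S$ is a contraction semigroup), this gives $\{\mathrm{Re}\,\lambda>-\mu_0\}\subset\rho(A)$ and $\widetilde M:=\sup_{0<\mathrm{Re}\,\lambda\le\mu_0}\|(\lambda I-A)^{-1}\|\le 2M$.

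The decisive step is a uniform square-integrability estimate for the orbits. For $x\in H$ and $\mu>0$, the map $\gamma\mapsto(\mu I+i\gamma I-A)^{-1}x=\int_0^\infty e^{-(\mu+i\gamma)t}S(t)x\,dt$ is the Fourier transform of $t\mapsto\mathbf 1_{\{t>0\}}e^{-\mu t}S(t)x\in L^2(\R;H)$, so Plancherel gives
\[
\int_0^\infty e^{-2\mu t}\|S(t)x\|^2\,dt=\frac{1}{2\pi}\int_\R\|(\mu I+i\gamma I-A)^{-1}x\|^2\,d\gamma .
\]
To let $\mu\downarrow 0$ I need the right-hand side bounded uniformly for $\mu\in(0,\mu_0]$, and this is where the hypothesis is used quantitatively: the resolvent identity $(\mu I+i\gamma I-A)^{-1}=(\mu_0 I+i\gamma I-A)^{-1}+(\mu_0-\mu)(\mu I+i\gamma I-A)^{-1}(\mu_0 I+i\gamma I-A)^{-1}$ together with the strip bound $\widetilde M$ gives, pointwise in $\gamma$, $\|(\mu I+i\gamma I-A)^{-1}x\|\le(1+\mu_0\widetilde M)\,\|(\mu_0 I+i\gamma I-A)^{-1}x\|$; integrating in $\gamma$ and re-applying the Plancherel identity at the fixed abscissa $\mu_0$ (where the trivial bound $\|S(t)x\|\le\|x\|$ suffices) bounds the right-hand side by $(1+\mu_0\widetilde M)^2(2\mu_0)^{-1}\|x\|^2$, independently of $\mu$. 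Monotone convergence as $\mu\downarrow 0$ then yields $\int_0^\infty\|S(t)x\|^2\,dt\le C_0\|x\|^2$ for every $x\in H$. I expect this to be the main obstacle: the uniform resolvent bound on $i\R$ is by itself useless as a pointwise $L^2(d\gamma)$ majorant, and the trick of comparing the vertical integral at abscissa $\mu$ with the trivially controlled one at a fixed abscissa $\mu_0>0$, via the resolvent identity, is exactly what legitimises the passage to the imaginary axis.

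To conclude, since $t\mapsto\|S(t)x\|$ is non-increasing (contraction semigroup), $t\,\|S(t)x\|^2\le\int_0^t\|S(s)x\|^2\,ds\le C_0\|x\|^2$, hence $\|S(t)\|^2_{\mathcal L(H)}\le C_0/t\to 0$; choosing $t_0$ with $\|S(t_0)\|<1$ and writing $t=nt_0+r$ with $r\in[0,t_0)$ gives $\|S(t)\|\le\|S(t_0)\|^n\le Ce^{-\omega t}$ for suitable $C,\omega>0$, which is the desired exponential stability. (Alternatively, the $L^2$-bound on the orbits together with Datko's theorem concludes at once.) The remaining ingredients — the Laplace representation of the resolvent, the Neumann-series strip, and the final contraction argument — are routine.
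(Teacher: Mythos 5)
Your argument is correct, but there is nothing in the paper to compare it against: Theorem \ref{lemraokv} is the Gearhart--Pr\"uss--Huang frequency-domain criterion, which the authors simply recall from \cite{pruss,hung} without proof. What you have written is a complete, self-contained proof along the standard lines: the forward implication via the Laplace representation of the resolvent; the converse via (i) upgrading the $\limsup$ bound to a uniform bound $M$ on $i\R$ using continuity of the resolvent on compacts, (ii) a Neumann-series extension to the strip $0<\mathrm{Re}\,\lambda\le\mu_0=1/(2M)$, (iii) the vector-valued Plancherel identity $\int_0^\infty e^{-2\mu t}\|S(t)x\|^2\,dt=\frac{1}{2\pi}\int_\R\|(\mu+i\gamma-A)^{-1}x\|^2\,d\gamma$, made uniform in $\mu\in(0,\mu_0]$ by comparing with the abscissa $\mu_0$ through the resolvent identity, and (iv) Datko-type bootstrapping of the resulting $L^2$ orbit bound to exponential decay. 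Each step checks out, and you correctly isolate where the Hilbert-space structure (Plancherel) and the contraction hypothesis (the Hille--Yosida bound on $\{\mathrm{Re}\,\lambda>0\}$ and the monotonicity of $t\mapsto\|S(t)x\|$ in the final step) are used. One cosmetic remark: the statement as printed contains typographical slips --- \eqref{1.9kv} should read $\|(i\gamma I-A)^{-1}\|_{\mathcal L(H)}$ rather than $\|(i\gamma I-A^{-1}\|_{\mathcal L(\mathcal X)}$ --- and you have read it in the intended (and only sensible) way.
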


We are now in a position to state the first main result of this article:
\begin{theo} \label{lrkv}
Assume that the assumptions (\ref{xx}) hold. If $\Gamma_1$ satisfies a (BLR) geometrical control condition, then there exist $C, \omega >0$ such that
$$
\left\|e^{t \dot{\mathcal{A}}}\right\|_{{\mathcal L} (\dot{\mathcal{H}})} \leq C \, e^{-\omega t}, \;\; \forall t >0.
$$
In other words, the solutions of the system (\ref{1})-(\ref{2}) exponentially approach the equilibrium state $\chi$.
\end{theo}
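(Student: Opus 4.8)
The plan is to verify the two conditions of Theorem~\ref{lemraokv} (the Pr\"uss--Huang criterion) for the generator $\dot{\mathcal{A}}$ on $\dot{\mathcal{H}}$. The first condition, $i\R \subset \rho(\dot{\mathcal{A}})$, follows almost for free: we already know that $\sigma(\dot{\mathcal{A}})$ consists only of isolated eigenvalues of finite algebraic multiplicity, and the strong stability result (Theorem~\ref{t1}, restricted to $\dot{\mathcal{H}}$, where the equilibrium $\chi$ is $0$) forces $\dot{\mathcal{A}}$ to have no eigenvalue on the imaginary axis; combining these two facts gives $i\R \cap \sigma(\dot{\mathcal{A}}) = \emptyset$. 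So the real work is the uniform resolvent bound~\eqref{1.9kv}.

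For the resolvent estimate I would argue by contradiction in the standard way: suppose there exist sequences $\gamma_k \in \R$ with $|\gamma_k| \to \infty$ and $\Phi_k = (y_k, z_k, u_k) \in \mathcal{D}(\dot{\mathcal{A}})$ with $\|\Phi_k\|_{\dot{\mathcal{H}}} = 1$ such that $(i\gamma_k I - \dot{\mathcal{A}})\Phi_k =: (F_k, G_k, H_k) \to 0$ in $\dot{\mathcal{H}}$. Writing the three component equations out ($i\gamma_k y_k - z_k = F_k$, $i\gamma_k z_k - \Delta y_k = G_k$, $i\gamma_k u_k + \tau^{-1}(u_k)_\rho = H_k$), the dissipation identity $\mathrm{Re}\,\langle \dot{\mathcal{A}}\Phi_k, \Phi_k\rangle$ (which, by the construction of the inner product~\eqref{5} and the conditions~\eqref{xx}, controls a positive combination of $\int_{\Gamma_1} z_k^2\,d\sigma$ and $\int_{\Gamma_1} u_k(\cdot,1)^2\,d\sigma$) shows that $z_k \to 0$ and $u_k(\cdot,1) \to 0$ in $L^2(\Gamma_1)$, and then the transport equation for $u_k$ propagates this to $u_k \to 0$ in $L^2(\Gamma_1 \times (0,1))$ and, via $z_k = u_k(\cdot,0)$ on $\Gamma_1$, gives $y_t$-type boundary control on $\Gamma_1$. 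Eliminating $z_k$ yields the reduced stationary problem $\gamma_k^2 y_k + \Delta y_k = -(G_k + i\gamma_k F_k)$ with Neumann data $\partial_\nu y_k = 0$ on $\Gamma_0$ and $\partial_\nu y_k = O(o(1))$ (in fact $-\alpha z_k - \beta u_k(\cdot,1)$, which tends to $0$) together with a trace bound $y_k|_{\Gamma_1} \to 0$ coming from $z_k = i\gamma_k y_k - F_k$ on $\Gamma_1$ combined with the energy of $z_k$ near $\Gamma_1$. At this point I invoke the (BLR) geometric control condition on $\Gamma_1$: the associated observability/interpolation inequality for the Helmholtz operator (the Lebeau--Robbiani type estimate referenced via~\cite{ba,LR}, exactly in the form used for boundary-stabilized waves) gives a bound of the form $\|y_k\|_{H^1(\Omega)}^2 \le C\big(\|y_k|_{\Gamma_1}\|^2 + \|\partial_\nu y_k|_{\Gamma_1}\|^2 + \|G_k + i\gamma_k F_k\|_{L^2}\cdot\|\cdot\|\big) + \text{l.o.t.}$, forcing $y_k \to 0$ in $H^1(\Omega)$ and $z_k \to 0$ in $L^2(\Omega)$, hence $\|\Phi_k\|_{\dot{\mathcal{H}}} \to 0$, contradicting $\|\Phi_k\| = 1$.

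The main obstacle, and the place where care is genuinely needed, is making the boundary-controlled Helmholtz estimate quantitative and uniform in $\gamma_k$: the (BLR) condition yields high-frequency resolvent control with the \emph{right} power of $\gamma$ only after one correctly handles the low-order terms and the fact that here the damping is boundary (Neumann) rather than Dirichlet, so one must use a trace regularity / hidden regularity argument to convert the interior energy identity and the multiplier estimates into the needed trace of $y_k$ on $\Gamma_1$. A secondary technical point is the co-dimension-one reduction: one must check that working on $\dot{\mathcal{H}}$ (rather than $\mathcal{H}$) is exactly what removes the obstruction at $\gamma = 0$ and that the quotient inner product~\eqref{5} still dominates the natural $H^1(\Omega)\times L^2(\Omega)\times L^2(\Gamma_1\times(0,1))$ norm on this subspace, so that convergence to $0$ in $\dot{\mathcal{H}}$ is equivalent to convergence to $0$ of each component — this is where the constraints~\eqref{var}, \eqref{xx} on $\varpi$ and $\xi$ get used. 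Once these pieces are in place, Theorem~\ref{lemraokv} delivers the exponential decay, and translating back to the original variables gives exponential convergence of solutions of~\eqref{1}--\eqref{2} to the equilibrium $\chi$.
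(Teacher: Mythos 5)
Your overall scheme coincides with the paper's: both verify the Pr\"uss--Huang criterion of Theorem \ref{lemraokv}, obtain $i\R\subset\rho(\dot{\mathcal A})$ from the discreteness of the spectrum together with the absence of imaginary eigenvalues, and run the same contradiction argument in which the dissipation identity (using \eqref{xx}) gives $z_n\to 0$ and $u_n(\cdot,1)\to 0$ in $L^2(\Gamma_1)$, after which the explicit solution of the transport equation forces $u_n\to 0$ in $L^2(\Gamma_1\times(0,1))$. The divergence is at the single step where (BLR) enters. You propose to prove a quantitative boundary observability/interpolation inequality for the Helmholtz problem $\gamma_n^2y_n+\Delta y_n=-(i\gamma_nf_n+g_n)$ directly, and you flag this as the main outstanding obstacle. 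The paper sidesteps it entirely: it introduces the \emph{undelayed} boundary-damped generator $\dot{\mathcal A}_0$ on the co-dimension-one space $\dot{\mathcal H}_0\subset H^1(\Omega)\times L^2(\Omega)$, takes the uniform resolvent bound \eqref{resol} for $\dot{\mathcal A}_0$ on the imaginary axis as a known consequence of (BLR) \cite{ba}, and then reads the delayed resolvent equation as the undelayed one with forcing $(f_n,g_n)$ in $\dot{\mathcal H}_0$ plus the boundary perturbation $\alpha f_n-\beta u_n(\cdot,1)$ on $\Gamma_1$, all of which vanish. This buys two things: the hard high-frequency estimate is black-boxed into a cited result rather than re-proved, and --- more importantly --- it removes exactly the difficulty you identified.

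On that last point there is a concrete gap in your version as written. In your Helmholtz observability estimate the source term is $G_k+i\gamma_kF_k$, and $i\gamma_kF_k$ is \emph{not} small in $L^2(\Omega)$: you only know $F_k\to 0$ in $H^1(\Omega)$, and it is multiplied by $\gamma_k\to\infty$. So the inequality you sketch does not close without an additional integration by parts (or a division by $\gamma_k$ combined with the energy identity, which is essentially what the paper's identity \eqref{eqzy} accomplishes) to trade $i\gamma_kF_k$ against $\nabla F_k$ and $z_k$. The first-order resolvent bound for $\dot{\mathcal A}_0$ measures the forcing $(f_n,g_n)$ in the $H^1\times L^2$ norm, where it genuinely tends to zero, which is why the paper's final estimate involves $\|f_n\|_{H^1(\Omega)}$ with no factor of $\gamma_n$. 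Your secondary remark about the equivalence of the $\dot{\mathcal H}$-norm with the product norm is legitimate but is already settled by the choice of $\varpi$ in \eqref{var} and \cite[Proposition 1]{amchen}, since the $\varpi$-bracket in \eqref{5} vanishes identically on $\dot{\mathcal H}$.
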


\begin{proof}

Our first concern is to show that $i\gamma$  is not an eigenvalue of $\dot{\mathcal{A}}$ for any real number  $\gamma$, which clearly implies (\ref{1.8wkv}). To do so, it suffices to check that the only solution to the equation
\begin{equation}
\dot{\mathcal{A}} Z = i \gamma Z, \; Z= (y,z,u) \in \mathcal{D}(\dot{\mathcal{A}}), \; \gamma \in \mathbb{R},
\label{eigenkv}
\end{equation}
is the trivial solution. The proof of this desired result has been already obtained in \cite{amchen}.

Now, suppose that condition \eqref{1.9kv} does not hold. This gives rise, thanks to Banach-Steinhaus Theorem (see \cite{br}), to the existence of a sequence of real numbers $\gamma_n \rightarrow \infty$ and a sequence of vectors
$Z_n= (y_n,z_n,u_n) \in \mathcal{D}(\dot{\mathcal{A}})$ with $\|Z_n\|_{\dot{\mathcal{H}}} = 1$ such that
\begin{equation}
\|  (i \gamma_n I - \dot{\mathcal{A}})Z_n\|_{\dot{\mathcal{H}}} \rightarrow 0\;\;\;\; \mbox{as}\;\;\;n\rightarrow \infty,
\label{1.12kv} \end{equation}
i.e.,
\begin{equation} i \gamma_n y_n - z_n   \equiv  f_{n}\rightarrow 0 \;\;\; \mbox{in}\;\; H^1(\Omega),
\label{1.13kv}\end{equation}
\begin{equation}
i \gamma_n
z_n - \Delta y_n   \equiv   g_n \rightarrow 0 \;\;\;
\mbox{in}\;\; L^2(\Omega),
\label{1.13bkv} \end{equation}
 \begin{equation}
 i \gamma_n u_{n} + \frac{(u_n)_\rho}{\tau} \equiv  v_{n} \rightarrow 0 \;\;\;
\mbox{in}\;\; L^2(\Gamma_1 \times (0,1)).
\label{1.14bkv} \end{equation}

The ultimate outcome will be convergence of $\|Z_n\|_{\dot{\mathcal{H}}}$ to zero as $n\rightarrow \infty$, which contradicts the fact that $ \forall \ n  \in \N \,\, \left\|Z_n\right\|_{\dot{\mathcal{H}}}=1.$

Firstly, since
\begin{eqnarray*}
\left \|  (i \gamma_n I - \dot{\mathcal{A}})Z_n \right\|_{\dot{\mathcal{H}}} &\geq& \left| \Re \left(\langle (i\beta_n I - \dot{\mathcal{A}})Z_n, Z_n\rangle_{\dot{\mathcal{H}}} \right) \right| =- \Re \langle  \dot{\mathcal{A}})Z_n, Z_n \rangle_{\dot{\mathcal{H}}} \\
&\geq& \frac{1}{2} \left( (2\alpha-\beta -  \xi \tau^{-1} ) \int_{\Gamma_1} |z(\sigma)|^2 \, \d\sigma + (\xi \tau^{-1}-\beta) \, \int_{\Gamma_1}  |u(\sigma,1)|^2 \, d\sigma \right),
\end{eqnarray*}
it follows from \eqref{1.12kv} that
\begin{equation}
\label{cvz}
 z_{n} \rightarrow 0,  \;\; \textstyle{and} \;\;   u_n (\cdot,1) \rightarrow 0 \; \hbox{in} \; L^2(\Gamma_1).
\end{equation}
Therewith
\begin{equation}
\label{cvzz}
u_n (\cdot,0) \rightarrow 0 \; \hbox{in} \; L^2(\Gamma_1).
\end{equation}
Subsequently,  amalgamating (\ref{1.13kv}) and (\ref{cvz}), we get
\begin{equation}
\label{cvyt}
i \gamma_n \, y_n =  z_n +  f_n \rightarrow 0 \; \hbox{in} \; L^2(\Gamma_1),
\end{equation}
and hence
\begin{equation}
\label{cvy}
 y_n \rightarrow 0
\; \hbox{in} \; L^2(\Gamma_1).
\end{equation}

\medskip

On the other hand, \eqref{1.14bkv} yields
$$
u_n(x,\rho) = u_n(x,0) \, e^{- i \tau \gamma_n x} + \tau \, \int_0^\rho e^{- i \tau \gamma_n (\rho - s)} \, v_n (s) \, ds.
$$
Putting the above deduction together with \eqref{1.14bkv} and \eqref{cvzz}, we deduce that
\begin{equation}
\label{znkv}
u_n \rightarrow 0 \; \; \mbox{in}\;\; L^2(\Gamma_1 \times (0,1))\ .
\end{equation}

Now, let us  take the inner product of (\ref{1.13bkv}) with $z_n = i \gamma_n y_n - f_n$ in $L^2(\Omega)$ (see (\ref{cvyt})). A straightforward computation gives
$$
\int_\Omega |z_n|^2 \, dx - \int_\Omega |\nabla y_n |^2 \, dx  =
$$
\begin{equation}
\label{eqzy}
- \int_{\Gamma_1}
\frac{\partial y_n}{\partial \nu} \, \overline{y}_n \, d \sigma  + \frac{1}{i \gamma_n} \int_\Omega \nabla y_n \, \nabla \overline{f}_n \, dx   -
\frac{1}{i \gamma_n} \, \int_{\Gamma_1} \frac{\partial y_n}{\partial \nu} \, \overline{f}_n \, d\sigma + \frac{1}{i \gamma_n} \, \int_\Omega g_n \, \overline{z}_n \, dx  = \circ (1).
\end{equation}

The intention now is to evoke the (BLR) in our situation. To proceed, let $\dot{\mathcal{H}}_0$ be the closed subspace of $\mathcal{H}_0 := H^1(\Omega) \times L^2(\Omega)$ and of co-dimension $1$ given by
$$
\dot{\mathcal{H}}_0 = \left\{(y,z) \in \mathcal{H}_0; \,
\int_\Omega z(x) \, dx + \alpha  \, \int_{\Gamma_1} y \, d \sigma = 0  \right\},
$$
and then consider the operator $\dot{\mathcal{A}}_0$  defined by
$$
\dot{\mathcal{A}}_0 : \mathcal{D}(\dot{\mathcal{A}}_0) \subset \dot{\mathcal{H}}_0 \rightarrow \dot{\mathcal{H}}_0,
$$
and
$$
\mathcal{D}(\dot{\mathcal{A}}_0) = \left\{(u,v) \in \mathcal{H}_0; \, (v, \Delta u) \in \mathcal{H}_0, \, \frac{\partial u}{\partial \nu} = 0 \; \mbox{on} \; \Gamma_0, \, \frac{\partial u}{\partial \nu} + \alpha \, v = 0 \; \mbox{on} \; \Gamma_1 \right\} \cap \dot{\mathcal{H}}_0.
$$
\begin{equation}
\label{1.62bs}
\dot{\mathcal{A}}_0 (y,z) = (z,\Delta y), \, \forall \, (y,z) \in \mathcal{D}(\dot{\mathcal{A}}_0).
\end{equation}
In view of  the (BLR) geometric control condition \cite{ba}, we have 
\begin{equation}
\label{resol}
\limsup_{|\gamma| \to + \infty }  \|  (i\gamma I - \dot{{\mathcal A}}_0)^{-1}\|_{{\mathcal L}(\dot{{\mathcal H}}_0)} <\infty.
\end{equation}
Next, exploring the fact that $(y_n,z_n)$ satisfies of the following system:
$$
\left\{
\begin{array}{lll}
- \gamma_n^2 y_n - \Delta y_n = i \gamma_n f_n + g_n, \; \textstyle{on}\; \Omega,\\
 \frac{\partial y_n}{\partial \nu} = 0, \; \textstyle{on}\; \Gamma_0, \\
\medskip
 \frac{\partial y_n}{\partial \nu} + i \, \alpha \, \gamma_n \, y_n = \alpha \, f_n - \beta \, u_n (\cdot,1), \; \textstyle{on}\; \Gamma_1,
\end{array}
\right.
$$
it follows from (\ref{resol}) that there exists a positive constant $C$ such that for sufficiently large $n$, we have:
$$
\left\|y_n\right\|_{H^1(\Omega)} \leq C \, \left( \left\| f_n\right\|_{H^1 (\Omega)} + \left\|g_n \right\|_{L^2(\Omega)} +
\left\|u_n (\cdot,1) \right\|^2_{L^2(\Gamma_1)}  \right).
$$
Therewith, \eqref{1.13kv}, \eqref{1.13bkv} and \eqref{cvy} yield
\begin{equation}
\label{cvyg}
y_n \rightarrow 0 \; \hbox{in} \; H^1(\Omega).
\end{equation}
Combining \eqref{eqzy} and \eqref{cvyg}, we get
\begin{equation}
\label{zcvd}
z_n \rightarrow 0 \; \hbox{in} \; L^2(\Omega).
\end{equation}
In the light of \eqref{znkv}, \eqref{cvyg}  and  \eqref{zcvd}, we  conclude that $\left\| Z_n\right\|_{\dot{\mathcal{H}}} \rightarrow 0 $ which was our objective.

Lastly, the sufficient conditions of Theorem \ref{lemraokv} are fulfilled and the proof of Theorem \ref{lrkv} is completed.

\end{proof}

\section{Polynomial stability} \label{sect4}
This section is intended to establish a result that is between the previous one in Section \ref{sect3} (exponential convergence) and that of \cite{amchen} (logarithmic convergence) for the system (\ref{1int})-(\ref{2int}) (see also (\ref{sint})). This desirable outcome will be established by introducing, as in \cite{phung}, the trapping geometry on the domain $\Omega$ of $\mathbb{R}^3$. In fact, we are going to adopt most of the  notations in \cite{phung} and consider the same geometric situation as in \cite{phung} by letting $D(r_1, r_2) = \left\{(x_1, x_2) \in \R^2; |x_1| < r_1, |x_2| < r_2 \right\}$, where $r_1, r_2 > 0$. Moreover, we pick three positive constants $m_1$, $m_2$ and $\mu$. Then, we choose $\Omega$ a connected open set in $\mathbb{R}^3$ whose boundary is is $\partial \Omega := \Gamma = \Gamma_1 \cup  \Gamma_2 \cup Y$, where
$\Gamma_1 = \overline{D(m_1,m_2)} \times \left\{\mu \right\}$, with boundary $\partial \Gamma_1$,
$\Gamma_2 = \overline{D(m_1,m_2)} \times \left\{-\mu \right\}$, with boundary $\partial \Gamma_2,$ and $Y$ is a surface with boundary $\partial Y = \partial \Gamma_1 \cup  \partial \Gamma_2$. On the other hand, $\partial \Omega$ is assumed to be either $C^2$ with $Y  \subset (\R^2 \setminus D(m_1,m_2)) \times \R$ (in particular $Y \in C^2$) or $\Omega$ is convex (in particular $Υ$ is Lipschitz). Next, let $\Theta$ be a small neighborhood of $Y$ in $\mathbb{R}^3$ such that $\Theta \cap D(M_1,M_2) \times [-\mu,\mu]=\emptyset$ for some $M_1 \in (0,m_1)$ and $M_2 \in (0,m_2)$.  Lastly, we choose $\omega=\Omega \cap \Theta$.

\medskip
In such a situation of trapped geometry $(\Omega,\omega),$ the well-known geometric control condition (BLR) \cite{ba} is not fulfilled and hence  Phung has established in \cite{phung} the polynomial decay convergence result of (\ref{1int})-(\ref{2int}) {\bf without delay}, i.e, $b \equiv 0$:

\begin{theo}\cite[Phung]{phung} \label{phung}
There exist $C,\delta >0$ such that for all $t > 0$ we have:
$$
\left\|e^{t {\mathcal A}_{a,0}}\right\|_{{\mathcal L}(\mathcal{D}({\mathcal A}_{a,0}), {\mathcal X}_a)} \leq \frac{C}{t^{\delta}},
$$
\end{theo}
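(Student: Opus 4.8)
The overall strategy is to characterize the polynomial decay of the semigroup $e^{t\mathcal{A}_{a,0}}$ through a resolvent estimate on the imaginary axis. We work on the undelayed state space $\mathcal{X}_a = H^{1,0}(\Omega) \times L^2_0(\Omega)$, so that the zero mode coming from the Neumann condition is quotiented out and $0 \in \rho(\mathcal{A}_{a,0})$. By the now-standard characterization of polynomial stability (Borichev--Tomilov, Batty--Duyckaerts) for bounded $C_0$-semigroups, the decay rate $C/t^\delta$ for data in $\mathcal{D}(\mathcal{A}_{a,0})$ is equivalent to the spectral inclusion $i\mathbb{R} \subset \rho(\mathcal{A}_{a,0})$ together with a polynomial resolvent bound
\begin{equation}
\left\| (i\gamma I - \mathcal{A}_{a,0})^{-1} \right\|_{\mathcal{L}(\mathcal{X}_a)} \leq C\,(1 + |\gamma|)^{1/\delta}, \qquad \gamma \in \mathbb{R}. \label{polyres}
\end{equation}
Thus the whole proof reduces to verifying \eqref{polyres} with the correct exponent.

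First I would settle the spectral inclusion $i\mathbb{R} \subset \rho(\mathcal{A}_{a,0})$. Since $e^{t\mathcal{A}_{a,0}}$ is contractive, it suffices to rule out imaginary eigenvalues. If $\mathcal{A}_{a,0}(y,z) = i\gamma (y,z)$ with $(y,z) \in \mathcal{D}(\mathcal{A}_{a,0})$, then $z = i\gamma y$, and taking the real part of $\langle \mathcal{A}_{a,0}(y,z),(y,z)\rangle_{\mathcal{X}_a}$ the dissipation identity $\Re\langle \mathcal{A}_{a,0}(y,z),(y,z)\rangle = -\int_\Omega a\,|z|^2\,dx$ forces $\int_\Omega a\,|z|^2\,dx = 0$, whence $z\equiv 0$, and therefore $y\equiv 0$, on the open set $\{a>0\} \supset \omega$. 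Consequently $y$ solves the Helmholtz equation $-\Delta y = \gamma^2 y$ in the connected domain $\Omega$ with homogeneous Neumann data on $\Gamma$ and vanishes on the open set $\omega$; the unique continuation principle for second-order elliptic operators then gives $y\equiv 0$, so $i\gamma \in \rho(\mathcal{A}_{a,0})$ for every real $\gamma$ (and $\gamma=0$ is excluded by working in $L^2_0$).

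The core of the argument --- and the only genuinely hard step --- is the bound \eqref{polyres}. Arguing by contradiction exactly as in the proof of Theorem \ref{lrkv}, I would suppose it fails and produce, via Banach--Steinhaus, sequences $\gamma_n \to \infty$ and normalized $(y_n,z_n) \in \mathcal{D}(\mathcal{A}_{a,0})$ with $\|(i\gamma_n I - \mathcal{A}_{a,0})(y_n,z_n)\|_{\mathcal{X}_a}$ decaying faster than $\gamma_n^{-1/\delta}$, and then reach a contradiction by showing $\|(y_n,z_n)\|_{\mathcal{X}_a}\to 0$. The dissipation identity immediately yields $\sqrt{a}\,z_n \to 0$ in $L^2(\Omega)$, that is, observability of $z_n$ on the damped region $\omega = \Omega \cap \Theta$; the difficulty is to propagate this information to all of $\Omega$, in particular across the undamped tunnel $D(M_1,M_2) \times (-\mu,\mu)$, with only an algebraic loss in $\gamma_n$. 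This is precisely where the trapping geometry enters: because $\omega$ is concentrated near the lateral surface $Y$ while the vertical bouncing-ball ray reflecting between $\Gamma_1$ and $\Gamma_2$ never meets $\omega$, the geometric control condition (BLR) \cite{ba} is violated and no uniform resolvent bound can hold. The mechanism of the proof is a Carleman/interpolation estimate with a weight adapted to $Y$, controlling the energy in the tunnel by the energy in $\omega$ at the cost of a power of the frequency; separating the tangential variables $(x_1,x_2)$ from the vertical variable $x_3$ reduces the trapped dynamics to a transverse one-dimensional problem whose spectral structure produces the algebraic loss $\gamma_n^{1/\delta}$. The main obstacle is to make this frequency loss explicit and uniform --- i.e. to establish the Carleman estimate with the sharp exponent and to accommodate the two admissible regularity settings for $Y$ ($Y\in C^2$, or $\Omega$ convex with $Y$ Lipschitz), where the boundary multiplier terms must be handled with care. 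Once \eqref{polyres} is secured with the optimal exponent, whose sharpness can be confirmed by a Gaussian-beam quasimode concentrated on the trapped ray, the Borichev--Tomilov theorem delivers the stated decay $C/t^\delta$ and completes the proof.
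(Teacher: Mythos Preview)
This theorem is not proved in the paper: it is quoted verbatim from Phung \cite{phung} and invoked as a black box in the proof of Theorem~\ref{lrkvint}. There is therefore no proof in the paper to compare your proposal against.

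As a sketch of how one might recover Phung's result, your outline is plausible but both anachronistic and incomplete. The Borichev--Tomilov characterization you invoke appeared in 2010, after \cite{phung}; Phung's original argument works directly in the time domain via an interpolation (observation) inequality with polynomial loss, not through a resolvent bound on the imaginary axis. More importantly, the step you call ``the core of the argument'' --- the Carleman/interpolation estimate delivering the explicit algebraic loss in the frequency --- is the entire substance of \cite{phung}, and in your proposal it is only named, not carried out. The separation-of-variables heuristic you offer is suggestive but does not by itself produce the estimate; the actual proof requires a quantitative propagation-of-smallness analysis tailored to the partially rectangular geometry. So while your reduction to a polynomial resolvent bound is a clean reformulation, what remains after that reduction is still the hard theorem, and your proposal does not close that gap.
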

where
$$
{\mathcal A}_{a,0} :=
\left(
\begin{array}{llcc}
0 \; \; \; \; \; \;  I \\
\Delta \; -a
\end{array}
\right) : \mathcal{D}({\mathcal A}_{a,0}) \subset
\mathcal{X}_a := H^{1,0} (\Omega) \times L^2_0(\Omega) \rightarrow \mathcal{X}_a
$$
and
$$
{\mathcal D}({\mathcal A}_{a,0}) :=\Bigl\{ (y, z) \in H^{1,0}(\Omega) \times H^{1,0}(\Omega); \Delta y \in L^2(\Omega); \,
\frac{\textstyle \partial y }{\textstyle \partial \nu } = 0 \, \mbox{on} \, \Gamma
\Bigr \}.
$$

Our second main result is:

\begin{theo} \label{lrkvint}
Suppose that the assumption (\ref{xxint}) is fulfilled and $\omega$ satisfies the trapped geometrical condition. Then, there exists $C >0$ such that the semigroup generated by the operator ${\mathcal{A}}_{a,b}$ see ((\ref{10int})-(\ref{11int})) satisfies 
$$
\left\|e^{t {\mathcal{A}}_{a,b}}\right\|_{{\mathcal L} (\mathcal{D}({\mathcal{A}}_{a,b}),\mathcal{H})} \leq C/t^{\delta/2}, \quad \forall t > 0,
$$
where $\delta > 0$ is the same constant given by Theorem \ref{phung}.
\end{theo}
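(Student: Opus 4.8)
The strategy is to transfer the polynomial decay of the undelayed semigroup $e^{t\mathcal{A}_{a,0}}$ (Theorem \ref{phung}) to the delayed semigroup $e^{t\mathcal{A}_{a,b}}$ via a frequency-domain (resolvent) criterion, exactly as the exponential case used Theorem \ref{lemraokv}. Recall the Batty--Duyckaerts / Borichev--Tomilov characterization: for a bounded $C_0$-semigroup with $i\mathbb{R}\subset\rho(\mathcal{A}_{a,b})$, one has $\|e^{t\mathcal{A}_{a,b}}\|_{\mathcal{L}(\mathcal{D}(\mathcal{A}_{a,b}),\mathcal{X})}\le C t^{-1/\ell}$ if and only if $\|(i\gamma I-\mathcal{A}_{a,b})^{-1}\|_{\mathcal{L}(\mathcal{X})}=O(|\gamma|^{\ell})$ as $|\gamma|\to\infty$. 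Since Theorem \ref{phung} gives decay rate $t^{-\delta}$ for the undelayed problem, the equivalence yields a resolvent bound $\|(i\gamma I-\mathcal{A}_{a,0})^{-1}\|=O(|\gamma|^{1/\delta})$ on $\mathcal{X}_a$; the target rate $t^{-\delta/2}$ for the delayed problem corresponds to showing $\|(i\gamma I-\mathcal{A}_{a,b})^{-1}\|_{\mathcal{L}(\mathcal{X})}=O(|\gamma|^{2/\delta})$, i.e. at most the square of the undelayed growth.

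First I would verify that $i\gamma$ is never an eigenvalue of $\mathcal{A}_{a,b}$ for $\gamma\in\mathbb{R}$ (and that $i\mathbb{R}\subset\rho(\mathcal{A}_{a,b})$). Testing $\mathcal{A}_{a,b}Z=i\gamma Z$ against $Z$ and using the dissipativity estimate implied by conditions (\ref{xxint}) forces the boundary/transport trace $u(\cdot,1)$ and the damped part $\sqrt{a}\,z$ to vanish, whence $u\equiv z\equiv 0$ on $\omega$; then the wave equation reduces to a Neumann eigenfunction problem on $\Omega$ with overdetermined vanishing on $\omega$, and unique continuation (valid here since the trapped-geometry hypotheses still guarantee a Carleman/unique-continuation estimate away from the trapped ray, or simply because eigenfunctions are real-analytic in the interior) kills $y$, giving $Z=0$.

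Next, for the quantitative resolvent bound, I would set up the equation $(i\gamma I-\mathcal{A}_{a,b})Z=F=(f,g,v)$ componentwise: $i\gamma y-z=f$, $i\gamma z-\Delta y+az+bu(\cdot,1)=g$, $i\gamma u+\tau^{-1}u_\rho=v$. Solving the transport ODE in $\rho$ explicitly as in (\ref{znkv}) gives $u(\cdot,1)=z\,e^{-i\gamma\tau}+\tau\int_0^1 e^{-i\gamma\tau(1-s)}v(s)\,ds$, so $\|u\|_{L^2(\Omega\times(0,1))}$ and $\|u(\cdot,1)\|_{L^2(\Omega)}$ are controlled by $\|z\|_{L^2}$ and $\|v\|$; crucially $bu(\cdot,1)$ is then a lower-order perturbation supported in $\omega$. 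Substituting, $(y,z)$ solves the undelayed elliptic system $-\gamma^2 y-\Delta y+i\gamma a y=\tilde g$ on $\Omega$ with $\partial y/\partial\nu=0$, where $\tilde g$ absorbs $g$, $i\gamma f$, $af$, and the extra term $-bu(\cdot,1)=-b\,e^{-i\gamma\tau} z-b\tau\int_0^1(\cdots)v$. The term $-b\,e^{-i\gamma\tau}z$ is the obstruction: it couples back into the unknown. I would move it to the left side and absorb it into the damping, writing the system as $(i\gamma I-\mathcal{A}_{a_\gamma,0})(y,z)=\text{(data in terms of }f,g,v)$ with a \emph{$\gamma$-dependent} effective damping $a_\gamma=a+b\,e^{-i\gamma\tau}$; since $|b\,e^{-i\gamma\tau}|=|b|<|a|$ pointwise by (\ref{xxint}), $a_\gamma$ stays uniformly comparable to $a$, and the Phung-type resolvent estimate for $\mathcal{A}_{a,0}$ — which is really an observability estimate for the undamped wave operator plus a bounded damping — goes through uniformly in $\gamma$ with the same polynomial growth $|\gamma|^{1/\delta}$ (one checks the Carleman/observability constants in \cite{phung} depend only on $\|a\|_{L^\infty}$, the lower bound of $a$ on $\omega$, and the geometry, not on phase factors). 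This yields $\|(y,z)\|_{\mathcal{X}_a}\le C|\gamma|^{1/\delta}\|F\|_{\mathcal{X}}$, possibly after a bootstrap that costs one more power: controlling $\|z\|_{L^2}$ from $z=i\gamma y-f$ and the equation gives $\|Z\|_{\mathcal{X}}\le C|\gamma|^{2/\delta}\|F\|_{\mathcal{X}}$, which is the desired bound.

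The main obstacle is precisely the middle step: showing that Phung's polynomial resolvent estimate for $\mathcal{A}_{a,0}$ survives the addition of the oscillatory zeroth-order term $b\,e^{-i\gamma\tau}z$ with constants \emph{uniform in $\gamma$}. If one cannot reuse \cite{phung} as a black box on $a_\gamma$, the fallback is to prove directly a frequency-dependent observability inequality $\|(y,z)\|^2_{\mathcal{X}_a}\le C|\gamma|^{2/\delta}(\|\sqrt{a}\,z\|^2_{L^2(\omega)}+\|F\|^2_{\mathcal{X}})$ by the same multiplier plus Carleman argument used in \cite{phung}, now carrying the extra $bu(\cdot,1)$ term through the energy identity and noting it is dominated by $\|\sqrt{a}z\|_{L^2(\omega)}$ and the data. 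Once the resolvent bound $O(|\gamma|^{2/\delta})$ on $i\mathbb{R}$ is in hand, together with $i\mathbb{R}\subset\rho(\mathcal{A}_{a,b})$ from the first step, the Borichev--Tomilov theorem delivers $\|e^{t\mathcal{A}_{a,b}}\|_{\mathcal{L}(\mathcal{D}(\mathcal{A}_{a,b}),\mathcal{X})}\le C t^{-\delta/2}$, and restricting to $\mathcal{H}$ (or identifying $\mathcal{X}$ with the relevant quotient as in the statement) completes the proof.
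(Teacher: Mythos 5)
Your overall skeleton agrees with the paper's: exclude imaginary eigenvalues via the dissipation identity, then establish the resolvent bound $\|(i\gamma-\mathcal{A}_{a,b})^{-1}\|=O(|\gamma|^{2/\delta})$ and invoke Borichev--Tomilov. However, the step where you handle the feedback term $b\,u(\cdot,1)$ diverges from the paper and contains a genuine gap. You substitute the explicit transport solution $u(\cdot,1)=e^{-i\gamma\tau}z+\dots$ back into the second equation and try to absorb $b\,e^{-i\gamma\tau}z$ into a $\gamma$-dependent \emph{complex} effective damping $a_\gamma=a+b\,e^{-i\gamma\tau}$, claiming Phung's resolvent estimate survives uniformly in $\gamma$. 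This is unproved and not obviously true: Phung's estimate is tied to the dissipativity of $\mathcal{A}_{a,0}$ and to the hypothesis $1/a\in L^\infty(\omega)$, both of which are lost for a complex coefficient whose real part is $a+b\cos(\gamma\tau)$. Moreover your justification ``$|b\,e^{-i\gamma\tau}|=|b|<|a|$ pointwise by \eqref{xxint}'' is incorrect: condition \eqref{xxint} compares $\|a\|_{L^\infty}$ and $\|b\|_{L^\infty}$, not $a(x)$ and $b(x)$ pointwise, so you cannot even guarantee $\Re\, a_\gamma\ge a-b>0$ on $\omega$. You flag this as ``the main obstacle,'' but the fallback you sketch (redoing the Carleman/observability argument with the extra term) is precisely the hard work the theorem is designed to avoid.

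The paper's resolution is simpler and explains where the exponent $\delta/2$ (rather than $\delta$) comes from, which your ``bootstrap that costs one more power'' does not capture. Arguing by contradiction with $\||\gamma_n|^{2/\delta}(i\gamma_n-\mathcal{A}_{a,b})Z_n\|\to 0$ and $\|Z_n\|=1$, the dissipation identity gives $\Re\langle(i\gamma_n-\mathcal{A}_{a,b})Z_n,Z_n\rangle\gtrsim\|\sqrt{b}\,u_n(\cdot,1)\|_{L^2}^2$, hence $|\gamma_n|^{1/\delta}\sqrt{b}\,u_n(\cdot,1)\to 0$ --- note the square root of the quadratic form is what halves the rate from $2/\delta$ to $1/\delta$. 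Consequently $b\,u_n(\cdot,1)$ can be moved to the \emph{data} side: $(y_n,z_n)$ satisfies the undelayed system $(i\gamma_n-\mathcal{A}_{a,0})(y_n,z_n)=o(|\gamma_n|^{-1/\delta})$, and the black-box resolvent bound $\|(i\gamma-\mathcal{A}_{a,0})^{-1}\|=O(|\gamma|^{1/\delta})$ from Theorem \ref{phung} (no modification of the damping, no uniformity-in-$\gamma$ issue) yields $(y_n,z_n)\to 0$; the transport formula then gives $u_n\to 0$, contradicting $\|Z_n\|=1$. If you replace your absorption-into-$a_\gamma$ step by this use of the dissipation identity, your argument matches the paper's.
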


The frequency domain theorem for polynomial stability  \cite{tomilov,BD} of a 
$C_0$-semigroup of contractions on a Hilbert space will be used:

\begin{theo}
\label{lemraokvint}
A $C_0$-semigroup $e^{t{\mathcal P}}$ of contractions on a Hilbert space ${\mathcal X}$ satisfies, for all $t >0$,
$$||e^{t{\mathcal P}}||_{{\mathcal L}(\mathcal{D}(\mathcal{P}),{\mathcal X})} \leq C/t^{\beta},$$
for some constant $C, \beta >0$  if and only if
\begin{equation}
\rho ({\mathcal P})\supset \bigr\{i \gamma \bigm|\gamma \in \R \bigr\} \equiv i \R, \label{1.8wkvint} \end{equation}
and
 \begin{equation}\limsup_{|\gamma| \to + \infty }  \| |\gamma|^{- 1/\beta} (i\gamma I -{\mathcal P})^{-1}\|_{{\mathcal L}(\mathcal{D}(\mathcal{P}),{\mathcal X})} <\infty.
\label{1.9kvint}
\end{equation}
\end{theo}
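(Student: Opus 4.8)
The plan is to apply the frequency-domain criterion of Theorem \ref{lemraokvint} to ${\mathcal P}={\mathcal A}_{a,b}$ with exponent $\beta=\delta/2$, so that I must establish (\ref{1.8wkvint}), namely $i\R\subset\rho({\mathcal A}_{a,b})$, together with the resolvent growth (\ref{1.9kvint}), which for $\beta=\delta/2$ reads $\|(i\gamma I-{\mathcal A}_{a,b})^{-1}\|_{{\mathcal L}({\mathcal X})}=O(|\gamma|^{2/\delta})$ as $|\gamma|\to\infty$. The engine of the argument is the principal-part estimate extracted from Phung's decay: applying the converse direction of Theorem \ref{lemraokvint} to ${\mathcal A}_{a,0}$ with $\beta=\delta$, Theorem \ref{phung} yields $i\R\subset\rho({\mathcal A}_{a,0})$ and, writing $R_\gamma:=(i\gamma I-{\mathcal A}_{a,0})^{-1}$, the bound $\|R_\gamma\|_{{\mathcal L}({\mathcal X}_a)}=O(|\gamma|^{1/\delta})$. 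I will feed this into the delayed resolvent after eliminating the memory variable.

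First I would record the dissipation. A computation with (\ref{5int}) parallel to the boundary case of Theorem \ref{lrkv}---using the Neumann condition on $\Gamma$, the constraint $u(\cdot,0)=z$, and $\Re\int_0^1 u_\rho\,\overline u\,d\rho=\tfrac12(|u(\cdot,1)|^2-|z|^2)$---shows, under (\ref{xxint}), that
\[-\Re\langle{\mathcal A}_{a,b}Z,Z\rangle_{\mathcal X}\ \ge\ c\left(\|\sqrt a\,z\|_{L^2(\Omega)}^2+\|\sqrt b\,u(\cdot,1)\|_{L^2(\Omega)}^2\right)\]
for some $c>0$, so ${\mathcal A}_{a,b}$ is dissipative and the damping region is quantitatively controlled. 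To rule out imaginary eigenvalues I would take $Z\in{\mathcal D}({\mathcal A}_{a,b})$ with ${\mathcal A}_{a,b}Z=i\gamma Z$; vanishing of the left-hand side forces $\sqrt a\,z=0$ and $u(\cdot,1)=0$, while integrating $i\gamma u+\tau^{-1}u_\rho=0$ with $u(\cdot,0)=z$ gives $u(x,\rho)=z(x)e^{-i\gamma\tau\rho}$, whence $u(\cdot,1)=0$ yields $z\equiv0$, and back-substitution then gives $y\equiv0$ and $u\equiv0$. Surjectivity of $i\gamma I-{\mathcal A}_{a,b}$ for real $\gamma$ is obtained by eliminating $z$ and $u$ to reach a coercive elliptic problem for $y$, solved by Lax--Milgram and a Fredholm/compactness argument; with the trivial kernel this gives (\ref{1.8wkvint}).

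The core is the estimate (\ref{1.9kvint}). Given $F=(f,g,h)\in{\mathcal X}$, I would solve $(i\gamma I-{\mathcal A}_{a,b})Z=F$ by integrating the transport equation $i\gamma u+\tau^{-1}u_\rho=h$, $u(\cdot,0)=z$, to get
\[u(x,\rho)=z(x)\,e^{-i\gamma\tau\rho}+\tau\int_0^\rho e^{-i\gamma\tau(\rho-s)}h(x,s)\,ds,\]
and in particular $u(\cdot,1)$ explicitly in terms of $z$ and $h$. Inserting this into the second resolvent equation folds the delay into a shifted damping and leaves the undelayed system $(i\gamma I-{\mathcal A}_{a,0})(y,z)=(f,\ \widetilde g-b\,e^{-i\gamma\tau}z)$ with $\widetilde g=g-b\tau\int_0^1 e^{-i\gamma\tau(1-s)}h(\cdot,s)\,ds$. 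Applying $R_\gamma$ gives $\|(y,z)\|_{{\mathcal X}_a}\lesssim|\gamma|^{1/\delta}(\|F\|+\|b\,z\|_{L^2(\Omega)})$. Now the dissipation together with $|\langle F,Z\rangle|\le\|F\|\,\|Z\|$ gives $\|\sqrt a\,z\|^2\lesssim\|F\|\,\|Z\|$, and since $b$ is subordinate to $\sqrt a$ on $\omega$ (where $1/a\in L^\infty$) this yields $\|b\,z\|_{L^2(\Omega)}\lesssim(\|F\|\,\|Z\|)^{1/2}$; moreover $u$ is recovered from $z,h$ with $\|u\|_{L^2(\Omega\times(0,1))}\lesssim\|z\|+\|h\|$ uniformly in $\gamma$, so $\|Z\|\lesssim\|(y,z)\|_{{\mathcal X}_a}+\|F\|$. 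Combining,
\[\|Z\|\ \lesssim\ |\gamma|^{1/\delta}\|F\|+|\gamma|^{1/\delta}\,(\|F\|\,\|Z\|)^{1/2},\]
and a Young inequality $|\gamma|^{1/\delta}(\|F\|\|Z\|)^{1/2}\le\tfrac12\|Z\|+\tfrac12|\gamma|^{2/\delta}\|F\|$ absorbs the middle term, leaving $\|Z\|\lesssim|\gamma|^{2/\delta}\|F\|$. This is exactly (\ref{1.9kvint}) with $\beta=\delta/2$, and Theorem \ref{lemraokvint} then delivers the claimed decay.

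The step I expect to be the main obstacle is controlling the delay perturbation $b\,e^{-i\gamma\tau}z$ by the dissipation: the damping only sees $\sqrt a\,z$, so bounding $\|b\,z\|$ requires $b$ subordinate to $\sqrt a$---essentially that $b$ is supported in $\omega$, where $1/a\in L^\infty(\omega)$ makes $a$ bounded below---and checking that this is compatible with (\ref{xxint}) is the delicate point. Crucially, this bound is only a \emph{half}-power in $\|Z\|$; it is precisely this square root, multiplied by the linear factor $|\gamma|^{1/\delta}$ coming from Phung's resolvent and then absorbed through Young's inequality, that squares the frequency growth to $|\gamma|^{2/\delta}$ and so halves the decay exponent from $\delta$ to $\delta/2$. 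A secondary technical chore is the bookkeeping of the mean-zero projection onto $L^2_0(\Omega)$ when passing to the reduced system, inserted so that the forcing lies in ${\mathcal X}_a$.
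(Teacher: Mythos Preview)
First, a point of orientation: Theorem~\ref{lemraokvint} is the Borichev--Tomilov criterion, which the paper merely quotes from \cite{tomilov,BD} and does not prove. Your proposal does not prove it either; you are \emph{applying} it to establish Theorem~\ref{lrkvint}, and the proof block that follows Theorem~\ref{lemraokvint} in the paper is likewise the proof of Theorem~\ref{lrkvint}. So the comparison below is between your argument for Theorem~\ref{lrkvint} and the paper's.

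At the strategic level the two coincide: check $i\R\subset\rho({\mathcal A}_{a,b})$ from the dissipation identity, then obtain the $O(|\gamma|^{2/\delta})$ resolvent growth by stripping off the delay contribution and feeding the remainder into Phung's estimate for ${\mathcal A}_{a,0}$. The paper packages the second step as a contradiction with sequences $(\gamma_n,Z_n)$; you run it as a direct bound closed by Young's inequality. These are equivalent, and your explanation of why the exponent halves from $\delta$ to $\delta/2$ is exactly the mechanism in the paper.

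There is one genuine detour in your version that you should remove. After integrating the transport equation you substitute $u(\cdot,1)=e^{-i\gamma\tau}z+(\text{term in }h)$ and move $b\,e^{-i\gamma\tau}z$ to the right-hand side, which then forces you to bound $\|bz\|$ by $\|\sqrt a\,z\|$ and hence to assume $b$ supported where $a$ is bounded below --- a hypothesis not contained in (\ref{xxint}), as you yourself flag. The paper sidesteps this completely: it leaves $b\,u_n(\cdot,1)$ intact as the perturbation and uses that the dissipation already controls $\sqrt b\,u_n(\cdot,1)$, so that
\[
\|b\,u_n(\cdot,1)\|_{L^2(\Omega)}\le\|\sqrt b\|_{L^\infty}\,\|\sqrt b\,u_n(\cdot,1)\|_{L^2(\Omega)}
\]
is bounded by the half-power in $\|Z_n\|$ with no support hypothesis on $b$. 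In your direct framework this simply means writing the reduced right-hand side as $(f,\,g-b\,u(\cdot,1))$ instead of substituting for $u(\cdot,1)$; your Young-inequality closure then goes through verbatim and the obstacle you anticipated disappears.
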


\begin{proof}

Firstly, we need to validate (\ref{1.8wkvint}) for the operator ${\mathcal{A}}_{a,b}$ by showing that  $i\gamma \notin \sigma \left( {\mathcal{A}}_{a,b} \right)$, for any arbitrary real number $\gamma$. This can be done by verifying that the equation
\begin{equation}
{\mathcal{A}}_{a,b} Z = i \gamma Z
\label{eigenkvint}
\end{equation}
with $Z= (y,z,u) \in \mathcal{D}({\mathcal{A}}_{a,b})$ and $\gamma \in \mathbb{R}$ has only the trivial solution. Indeed, the equation \eqref{eigenkvint} writes
\begin{eqnarray}
&&z = i \gamma y \label{eigen1int}\\
&& \Delta y  - a z - b u(1) = i \gamma z, \label{eigen2int}\\
&&- \frac{u_\rho}{\tau}  = i \gamma u, \label{eigen3int}\\
&& \frac{\textstyle \partial y }{\textstyle \partial \nu } = 0 \,\,\, \mbox{on} \, \Gamma, \label{eigen5int}\\
&&z=u(\cdot,0) \,\,\, \mbox{on} \, \Omega.
\label{eigen6int}
\end{eqnarray}
If $\gamma = 0$, then  (\ref{eigen1int})-(\ref{eigen6int}) lead us to claim that the only solution of \eqref{eigenkvint} is the trivial one. In turn, if $\gamma \neq 0$, then by taking the inner product of (\ref{eigenkvint}) with $Z$ we get:
\begin{eqnarray}
0=2 \Re \left(\langle {\mathcal{A}}_{a,b} Z,Z \rangle_{{\mathcal{H}}} \right) &\leq& \displaystyle   \left[ \left\|b\right\|_{L^\infty(\Omega)} - 2\left\|a\right\|_{L^\infty(\Omega)} + \xi \tau^{-1} \right] \int_{\Omega} |z(\sigma)|^2 \, \d\sigma \nonumber \\
&+& \displaystyle \left[ \left\|b\right\|_{L^\infty(\Omega)} -\xi \tau^{-1} \right] \, \int_{\Omega}  |u(\sigma,1)|^2 \, d\sigma  \;\; \leq 0. \label{1.7kvint}
\end{eqnarray}
Whereupon, we have $z=u(\cdot,1) = 0$ on $\Omega$. Consequently, the only solution of \eqref{eigenkvint} is the trivial one.

It remains now to show that the resolvent operator of ${\mathcal{A}}_{a,b}$ satisfies the condition \eqref{1.9kvint} with $\beta = \delta /2$. Otherwise, Banach-Steinhaus Theorem (see \cite{br}) gives rise  the existence of  a sequence of real numbers $\gamma_n \rightarrow \infty$ and a sequence of vectors
$Z_n= (y_n,z_n,u_n) \in \mathcal{D}({\mathcal{A}}_{a,b})$ with $\|Z_n\|_{\mathcal{X}} = 1$ such that
\begin{equation}
\|  |\gamma_n|^{2/\delta} (i \gamma_n I - {\mathcal{A}}_{a,b})Z_n\|_{\mathcal{X}} \rightarrow 0\;\;\;\; \mbox{as}\;\;\;n\rightarrow \infty,
\label{1.12kvint} \end{equation}
that is,
\begin{equation} |\gamma_n|^{2/\delta} \left(i \gamma_n y_n - z_n\right)   \equiv  f_{n}\rightarrow 0 \;\;\; \mbox{in}\;\; H^{1,0}(\Omega),
\label{1.13kvint}\end{equation}
 \begin{equation}
  |\gamma_n|^{2/\delta} \left( i \gamma_n
z_n - \Delta y_n  + a z_n + b u_n(\cdot,1) \right) \equiv   g_n \rightarrow 0 \;\;\;
\mbox{in}\;\; L^2_0(\Omega),
\label{1.13bkvint} \end{equation}
 \begin{equation}
 |\gamma_n|^{2/\delta} \left(i \gamma_n u_{n} + \frac{(u_n)_\rho}{\tau} \right) \equiv  h_{n} \rightarrow 0 \;\;\;
\mbox{in}\;\; L^2(\Omega \times (0,1)).
\label{1.14bkvint} \end{equation}

The immediate task is to derive from \eqref{1.12kvint} that $\|Z_n\|_{\mathcal{X}} \rightarrow 0$, which  contradicts $  \left\|Z_n\right\|_{\mathcal{X}}=1,$ for all $ n  \in \N.$

\medskip

According to (\ref{1.7kvint}), we have
\begin{equation}
\label{lint1}
|\gamma_n|^{1/\delta} \, \sqrt{a} \, z_n \rightarrow 0,
|\gamma_n|^{1/\delta} \, \sqrt{b} \, u_n(\cdot,1) \rightarrow 0, \; \hbox{in} \; L^2(\Omega),
\end{equation}
which in turn yields
\begin{equation} |\gamma_n|^{1/\delta} \left(i \gamma_n y_n - z_n\right)   \equiv  \frac{f_{n}}{|\gamma_n|^{1/\delta}} \rightarrow 0 \;\;\; \mbox{in}\;\; H^{1,0}(\Omega),
\label{1.13kvintf}\end{equation}
 \begin{equation}
  |\gamma_n|^{1/\delta} \left( i \gamma_n
z_n - \Delta y_n  + a z_n  \right) \equiv   - \frac{b u_n(\cdot,1)}{|\gamma_n|^{1/\delta}} +\frac{g_n}{|\gamma_n|^{1/\delta}} \rightarrow 0 \;\;\;
\mbox{in}\;\; L_0^2(\Omega),
\label{intf}
\end{equation}
and we note that $(y_n,z_n) \in \mathcal{D}(\mathcal{A}_a)$. Based on Theorem \ref{phung}, we conclude that
$$y_n \rightarrow 0 \; \hbox{in} \; H^{1,0}(\Omega) \; \hbox{and} \; z_n \rightarrow 0 \; \hbox{in} \; L^2(\Omega).$$

\medskip
Furthermore, we have
$$
u_n(x,\rho) = u_n(x,0) e^{-i\tau \gamma_n x} + \tau \int_0^\rho e^{-i\tau \gamma_n (\rho-s)} \, \frac{h_n(s)}{|\gamma_n|^{2/\delta}} \, ds =
$$
$$
z_n(x) e^{-i\tau \gamma_n x} + \tau \int_0^\rho e^{-i\tau \gamma_n (\rho-s)} \, \frac{h_n(s)}{|\gamma_n|^{2/\delta}} \, ds \rightarrow 0 \; \hbox{in} \; L^2(\Omega \times (0,1)).
$$
Hence $\|Z_n\|_{\mathcal{X}} \rightarrow 0$, and so we managed to show that the conditions (\ref{1.8wkvint}) and (\ref{1.9kvint}) are fulfilled. This achieves the proof of Theorem \ref{lrkvint}.
\end{proof}

\begin{rema}
In the case where $\Omega = (0,1) \times (0,1)$ and
$$
a(x) = \left\{
\begin{array}{ll}
1, \, \forall \, x \in (0,\varepsilon) \times (0,1),\\
0, \, \text{elsewhere},
\end{array}
\right.,
$$
where $\varepsilon > 0$ is a constant, we have according to \cite{stahn} that $\delta = 2/3$ (which is the optimal decay rate). We obtain in this case that the polynomial decay rate for the delayed system (\ref{1int})-(\ref{2int}) is given by $t^{-1/3}$.
\label{rem1}
\end{rema}

\begin{rema}
Using Theorem \ref{lrkvint} and arguing as in \cite{amchen}, one can prove that the solutions to the system  (\ref{1int})-(\ref{2int}) polynomially converge (with a rate $t^{-\delta/2}$) to their equilibrium solution $(\zeta,0,0)$, where 
$$\zeta =\displaystyle  \frac{1}{ \displaystyle \int_{\Omega} (a+b) \, dx }  \left[ \int_{\Omega} \left( z_0 +(a+b) y_0  -b \tau \int_0^1  f(\rho) \, \rho  \right) \, dx \right].$$
Particularly, the convergence rate is $t^{-1/3}$ in the case of Remark \ref{rem1}.
\end{rema}

\section{Conclusion}

This paper has addressed the issue of improving the asymptotic behavior of a wave equation under the presence of a boundary or internal delay term. Assuming the absence of any presence of any displacement, the convergence rate is shown to be either exponential under (BLR) control geometric condition in the control zone or polynomial when a trapped ray occurs in the geometry of the domain.

\section*{Acknowledgment}
This work was supported and funded by Kuwait University, Research Project No. (SM04/17).


\begin{thebibliography}{90}

\bibitem{amchen} K. Ammari and B. Chentouf, Asymptotic behavior of a delayed wave equation without displacement term, {\em Z. Angew. Math. Phys.,} {\bf 68} (2017), no. 5, Art. 117, 13 pp.

\bibitem{ka3} K. Ammari and S. Nicaise, Stabilization of elastic systems by collocated feedback, {\em Lecture Notes in Mathematics}, 2124, Springer, Cham, 2015.

\bibitem{ba} C. Bardos, G. Lebeau and J. Rauch, {Sharp sufficient conditions for the observation, controllability ans stabilization of waves from the boundary,} {\em SIAM J. Control Optim.}, {\bf 30} (1992), 1024--1064.

\bibitem{BD}  C.J. K. Batty and T. Duyckaerts, Non-uniform stability for bounded semi-groups on Banach spaces,
{\em J. Evol. Equ.,} {\bf 8} (2008), 765--780.

\bibitem{br} H. Brezis, {\em Analyse Fonctionnelle, Th\'eorie et Applications,} Paris: Masson, 1992.

\bibitem {tomilov} A. Borichev and Y. Tomilov, Optimal polynomial decay of
functions and operator semigroups, \emph{Math. Ann.,} \textbf{347} (2010), 455--478.

\bibitem{burq} N. Burq, D\'ecroissance de l'\'energie locale de l'\'equation des ondes pour le probl\`eme ext\'erieur et absence de r\'esonance au voisinage du r\'eel, {\em Acta Math.,} {\bf 180} (1998), 1--29.


\bibitem{da} C.M. Dafermos, Asymptotic stability in viscoelasticity, {\em Arch. Rational Mech. Anal.}, {\bf 37} (1970), 297--308.

\bibitem{hung} F. Huang, Characteristic conditions for exponential stability of linear dynamical systems in Hilbert space, {\em Ann. Differential Equations,} {\bf 1} (1985), 43--56.


\bibitem{LR}  G. Lebeau and L. Robbiano, Stabilisation de l'\'equation des ondes par le bord, {\em Duke Math. J.,} {\bf 86} (1997), 465--491.


\bibitem{valein} S. Nicaise and J. Valein, Stabilization of second order evolution equations with unbounded feedback with delay, {\em ESAIM Control Optim. Calc. Var.,} {\bf 16} (2010), 420--456.

\bibitem{phung} K.D. Phung, Polynomial decay rate for the dissipative wave equation, {\em J. Differential Equations.,} {\bf 240} (2007),  92--124.

\bibitem{pruss} J. Pr\"uss, On the spectrum of $C_0$-semigroups, {\em Trans. Amer. Math. Soc.,} {\bf 284} (1984), 847--857.


\bibitem{stahn} R. Stahn, Optimal decay rate for the wave equation on a square with constant damping on a strip, {\em Z. Angew. Math. Phys,} {\bf 68} (2017), no. 2, Art. 36, 10 pp.


\end{thebibliography}
\end{document}